\documentclass{article}
\usepackage{amssymb}
\usepackage{amsfonts}
\usepackage{amsmath}

\setcounter{MaxMatrixCols}{10}

\newtheorem{theorem}{Theorem}

\newtheorem{corollary}[theorem]{Corollary}

\newtheorem{definition}[theorem]{Definition}

\newtheorem{proposition}[theorem]{Proposition}
\newtheorem{remark}[theorem]{Remark}

\newenvironment{proof}[1][Proof]{\noindent\textbf{#1.} }{\ \rule{0.5em}{0.5em}}
\input{tcilatex}
\begin{document}

\title{Polynomials Related to Harmonic Numbers and Evaluation of Harmonic
Number Series I}
\author{Ayhan Dil and Veli Kurt \\
Department of Mathematics, Akdeniz University, 07058 Antalya Turkey\\
adil@akdeniz.edu.tr, vkurt@akdeniz.edu.tr}
\maketitle

\begin{abstract}
In this paper we focus on two new families of polynomials which are
connected with exponential polynomials $\phi _{n}\left( x\right) $ and
geometric polynomials $F_{n}\left( x\right) $. We discuss their
generalizations and show that these new families of polynomials and their
generalizations are useful to obtain closed forms of some series related to
harmonic numbers.

\textbf{2000 Mathematics Subject Classification. }11B73, 11B75, 11B83

\textbf{Key words: }Exponential numbers and polynomials, Bell numbers and
polynomials, geometric numbers and polynomials, Fubini numbers and
polynomials, harmonic and hyperharmonic numbers.
\end{abstract}

\section{Introduction}

\qquad In this work we are interested in two new families of polynomials,
namely $harmonic-$exponential polynomials and $harmonic-$geometric
polynomials. We introduce these polynomials and discuss several interesting
generalizations of them with the help of Theorem $\ref{Main Theorem}$.
Furthermore we list these polynomials and the numbers that we obtain from
these polynomials.

Suppose we are given an entire function $f$\ and a function $g$, analytic in
a region containing the annulus $K=\{z:r<|z|<R\}$ where $0<r<R$. Hence these
functions have following series expansions,%
\begin{equation*}
f\left( x\right) =\sum_{n=0}^{\infty }p_{n}x^{n}\text{ and }g\left( x\right)
=\sum_{n=-\infty }^{\infty }q_{n}x^{n}.
\end{equation*}%
The following theorem is related to the functions $f$ and $g$.

\begin{theorem}
\label{Main Theorem}$\left( \cite{B}\right) $ Let the functions $f$ and $g$
be described as above. If the series%
\begin{equation*}
\sum_{n=-\infty }^{\infty }q_{n}f\left( n\right) x^{n}
\end{equation*}%
converges absolutely on $K$, then%
\begin{equation}
\sum_{n=-\infty }^{\infty }q_{n}f\left( n\right) x^{n}=\sum_{m=0}^{\infty
}p_{m}\sum_{k=0}^{m}\QATOPD\{ \} {m}{k}x^{k}g^{\left( k\right) }\left(
x\right)  \label{0}
\end{equation}%
holds for all $x\in K$.
\end{theorem}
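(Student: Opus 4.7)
The plan is to reduce the identity to the classical Stirling-number expansion of a monomial, $n^{m}=\sum_{k=0}^{m}\left\{\genfrac{}{}{0pt}{}{m}{k}\right\}n^{\underline{k}}$, where $n^{\underline{k}}=n(n-1)\cdots(n-k+1)$ is the falling factorial. First I would observe that termwise differentiation of the Laurent series for $g$ on $K$ gives
\begin{equation*}
x^{k}g^{(k)}(x)=\sum_{n=-\infty }^{\infty }q_{n}\,n^{\underline{k}}\,x^{n},
\end{equation*}
which is valid inside the annulus of convergence since Laurent series may be differentiated term by term there.

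Next I would expand $f$ as a power series and insert the Stirling identity:
\begin{equation*}
f(n)=\sum_{m=0}^{\infty }p_{m}n^{m}=\sum_{m=0}^{\infty }p_{m}\sum_{k=0}^{m}\left\{ \genfrac{}{}{0pt}{}{m}{k}\right\} n^{\underline{k}}.
\end{equation*}
Substituting into the left-hand side of $(\ref{0})$ yields a triple sum
\begin{equation*}
\sum_{n=-\infty }^{\infty }q_{n}f(n)x^{n}=\sum_{n=-\infty }^{\infty }\sum_{m=0}^{\infty }\sum_{k=0}^{m}q_{n}p_{m}\left\{ \genfrac{}{}{0pt}{}{m}{k}\right\} n^{\underline{k}}x^{n}.
\end{equation*}
If this triple sum can be rearranged so that the sum over $n$ comes last, each inner sum $\sum_{n}q_{n}n^{\underline{k}}x^{n}$ collapses to $x^{k}g^{(k)}(x)$ by the first observation, producing exactly the right-hand side.

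Thus the whole proof hinges on justifying the interchange of the three summations. This is the step where the hypothesis of absolute convergence on $K$ is needed: by Fubini/Tonelli for sums, it suffices to check that the iterated absolute sum
\begin{equation*}
\sum_{m=0}^{\infty }|p_{m}|\sum_{k=0}^{m}\left\{ \genfrac{}{}{0pt}{}{m}{k}\right\} \sum_{n=-\infty }^{\infty }|q_{n}|\,|n|^{\underline{k}}|x|^{n}
\end{equation*}
is finite, and since $\sum_{k}\left\{\genfrac{}{}{0pt}{}{m}{k}\right\}n^{\underline{k}}=n^{m}$ for $n\geq 0$ (with a comparable bound on the negative tail), this bound is controlled by $\sum_{n}|q_{n}|\,|f|(|n|)\,|x|^{n}$, which is exactly the absolute convergence assumption of the theorem. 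I expect this bookkeeping — getting a single majorant that dominates the triple series on every compact subannulus of $K$ — to be the only real obstacle; once the interchange is legitimate, the identity follows immediately from the two ingredients above.
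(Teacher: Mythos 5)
First, a point of comparison: the paper does not actually prove this theorem --- it is quoted from Boyadzhiev \cite{B} with no proof supplied --- so there is no internal argument to measure yours against. Your two algebraic ingredients, namely $x^{k}g^{(k)}(x)=\sum_{n}q_{n}n^{\underline{k}}x^{n}$ and the polynomial identity $n^{m}=\sum_{k=0}^{m}S(m,k)\,n^{\underline{k}}$ (writing $S(m,k)$ for the Stirling numbers of the second kind), are exactly the right skeleton, and they already constitute a complete proof in every situation where the paper invokes the theorem: there $g$ is analytic on a disk ($q_{n}=0$ for $n<0$) and $f$ is a polynomial, so the sum over $m$ is finite and no interchange of two infinite sums is required.

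The gap is precisely in the step you flag as the only real obstacle. The iterated absolute sum you propose as a Fubini majorant is \emph{not} controlled by the stated hypothesis. For $n\geq 0$ one has $\sum_{k}S(m,k)\,|n^{\underline{k}}|=n^{m}$ exactly, so your majorant equals $\sum_{n}|q_{n}|\,\tilde{f}(n)\,|x|^{n}$ with $\tilde{f}(t)=\sum_{m}|p_{m}|t^{m}$; this dominates, but is in general strictly stronger than, the assumed $\sum_{n}|q_{n}|\,|f(n)|\,|x|^{n}<\infty$, since $|f(n)|\leq\tilde{f}(n)$ can be an extremely lossy inequality. Take $f(z)=\sin \pi z$: then $f(n)=0$ for every integer, so the hypothesis holds trivially for any $g$, yet $\tilde{f}(n)$ grows like $e^{\pi n}$ and the majorant diverges; with $g(x)=1/(1-x)$ one can check that the right-hand side of $\left( \ref{0}\right) $ does not even converge for $e^{-\pi}<|x|<1$, so no refinement of the bookkeeping can derive the identity from the literal hypothesis. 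On the negative-index side matters are worse: $|n^{\underline{k}}|=|n|(|n|+1)\cdots(|n|+k-1)$ is a \emph{rising} factorial, and $\sum_{m}|p_{m}|\sum_{k}S(m,k)\,|n|(|n|+1)\cdots(|n|+k-1)$ can be $+\infty$ already for a single fixed $n<0$ (e.g.\ $f=\exp$, where $\sum_{k}|n|(|n|+1)\cdots(|n|+k-1)(e-1)^{k}/k!$ diverges because $e-1>1$), so there is no ``comparable bound on the negative tail.'' To make the proof honest you must either strengthen the hypothesis to absolute convergence of the double series $\sum_{n,m}|q_{n}p_{m}n^{m}x^{n}|$ --- which is what your interchange genuinely uses, and is presumably what the source intends --- or first prove the identity for polynomial $f$ (where it is immediate) and then supply a real limiting argument for the passage to entire $f$.
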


Throughout this paper we consider the function $g$ in Theorem $\ref{Main
Theorem}$ as a function which is analytic on the disk $K=\{z:|z|<R\}$. Hence
the formula $\left( \ref{0}\right) $\ turns out to be%
\begin{equation}
\sum_{n=0}^{\infty }\frac{g^{\left( n\right) }\left( 0\right) }{n!}f\left(
n\right) x^{n}=\sum_{n=0}^{\infty }\frac{f^{\left( n\right) }\left( 0\right) 
}{n!}\sum_{k=0}^{n}\QATOPD\{ \} {n}{k}x^{k}g^{\left( k\right) }\left(
x\right) .  \label{i0}
\end{equation}

We show that families of polynomials and their generalizations presented in
this paper are considerably useful to obtain closed forms of some series
related to harmonic numbers. For instance we obtain following closed forms:%
\begin{eqnarray}
\sum_{n=1}^{\infty }\left( \sum_{k=1}^{n}kH_{k}\right) x^{n} &=&\frac{%
x\left( 1-\ln \left( 1-x\right) \right) }{\left( 1-x\right) ^{3}}\text{,}
\label{s3} \\
\sum_{n=1}^{\infty }\left( \sum_{k=1}^{n}k^{2}H_{k}\right) x^{n} &=&\frac{%
x\left( 1+2x-\left( 1+x\right) \ln \left( 1-x\right) \right) }{\left(
1-x\right) ^{4}}\text{.}  \label{s4}
\end{eqnarray}%
Also for hyperharmonic series, one of our results is%
\begin{equation}
\sum_{n=1}^{\infty }\left( \sum_{k=1}^{n}kH_{k}^{\left( \alpha \right)
}\right) x^{n}=\frac{x\left( 1-\alpha \ln \left( 1-x\right) \right) }{\left(
1-x\right) ^{\alpha +2}}  \label{s7}
\end{equation}%
where $\alpha $ is a nonnegative integer.

In the rest of this section we will introduce some important notions.

\textbf{Stirling numbers of the first and second kind}

Stirling numbers of the first kind $\QATOPD[ ] {n}{k}$ and Stirling numbers
of the second kind $\QATOPD\{ \} {n}{k}$ are defined by means of $\left( 
\cite{AS, GKP}\right) $%
\begin{equation}
\left( x\right) _{n}=x\left( x-1\right) \ldots \left( x-n+1\right)
=\sum_{k=0}^{n}\QATOPD[ ] {n}{k}x^{k}  \label{i1}
\end{equation}%
and%
\begin{equation}
x^{n}=\sum_{k=0}^{n}\QATOPD\{ \} {n}{k}\left( x\right) _{k}  \label{i2}
\end{equation}%
respectively. These numbers are quite common in combinatorics $\left( \cite%
{BG, BQ, C, Ri}\right) $.

We note that for $n\geq k\geq 1$ the following identity holds for Stirling
numbers of the second kind%
\begin{equation}
\QATOPD\{ \} {n}{k}=\QATOPD\{ \} {n-1}{k-1}+k\QATOPD\{ \} {n-1}{k}.
\label{i3}
\end{equation}%
\qquad \qquad \qquad \qquad \qquad \qquad \qquad \qquad \qquad \qquad \qquad
\qquad \qquad \qquad \qquad \qquad \qquad \qquad \qquad \qquad

There is a certain generalization of these numbers namely $r$-Stirling
numbers $\left( \cite{Br}\right) $ which is similar to the weighted Stirling
numbers $\left( \cite{CA1, CA2}\right) $. Combinatorial meanings, recurrence
relations, generating functions and several properties of these numbers are
given in $\cite{Br}$.

\textbf{Exponential polynomials and numbers}

Exponential polynomials (or single variable Bell polynomials) $\phi
_{n}\left( x\right) $ are defined by $\left( \cite{BL1, Ri}\right) $%
\begin{equation}
\phi _{n}\left( x\right) :=\sum_{k=0}^{n}\QATOPD\{ \} {n}{k}x^{k}.
\label{i5}
\end{equation}

Grunert stated these polynomials in terms of Stirling numbers of the second
kind and obtained some fundamental formulas $\left( \cite{G}\right) $.
Besides Grunert $\left( \cite{G}\right) $, mainly Ramanujan $\left( \cite{BB}%
\right) $, Bell $\left( \cite{BL1}\right) $ and\ Touchard $\left( \cite{T}%
\right) $ are well-known studies on these polynomials. We refer $\cite{B2}$\
for comprehensive information on exponential polynomials.

The first few exponential polynomials are:%
\begin{equation}
\begin{tabular}{|l|}
\hline
$\phi _{0}\left( x\right) =1\text{,}$ \\ \hline
$\phi _{1}\left( x\right) =x\text{,}$ \\ \hline
$\phi _{2}\left( x\right) =x+x^{2}\text{,}$ \\ \hline
$\phi _{3}\left( x\right) =x+3x^{2}+x^{3}\text{,}$ \\ \hline
$\phi _{4}\left( x\right) =x+7x^{2}+6x^{3}+x^{4}\text{.}$ \\ \hline
\end{tabular}
\label{i6}
\end{equation}

The well known exponential numbers (or Bell numbers) $\left( \cite{BL2, C,
CG}\right) $\ are obtained by setting $x=1$ in $\phi _{n}\left( x\right) $,$%
\,$i.e%
\begin{equation}
\phi _{n}:=\phi _{n}\left( 1\right) =\sum_{k=0}^{n}\QATOPD\{ \} {n}{k}.
\label{i10}
\end{equation}%
The first few exponential numbers are:%
\begin{equation}
\phi _{0}=1\text{, }\phi _{1}=1\text{, }\phi _{2}=2\text{, }\phi _{3}=5\text{%
, }\phi _{4}=15\text{.}  \label{i11}
\end{equation}

\textbf{Geometric polynomials and numbers}

Geometric polynomials are defined in \cite{S, ST} as follows:%
\begin{equation}
F_{n}\left( x\right) :=\sum_{k=0}^{n}\QATOPD\{ \} {n}{k}k!x^{k}.  \label{i13}
\end{equation}

We use $F_{n}$ as one of the most common notations for these polynomials in
the honor of Guido Fubini $\left( \cite{IS}\right) $. These polynomials are
also called as Fubini polynomials $\left( \cite{B}\right) $ or ordered Bell
polynomials $\left( \cite{ST}\right) $.

The first few geometric polynomials are:%
\begin{equation}
\begin{tabular}{|l|}
\hline
$F_{0}\left( x\right) =1\text{,}$ \\ \hline
$F_{1}\left( x\right) =x\text{,}$ \\ \hline
$F_{2}\left( x\right) =x+2x^{2}\text{,}$ \\ \hline
$F_{3}\left( x\right) =x+6x^{2}+6x^{3}\text{,}$ \\ \hline
$F_{4}\left( x\right) =x+14x^{2}+36x^{3}+24x^{4}\text{.}$ \\ \hline
\end{tabular}
\label{i13+}
\end{equation}

Specializing $x=1$ in $\left( \ref{i13}\right) $ we get geometric numbers
(or ordered Bell numbers) $F_{n}$ as $\left( \cite{B, ST, W}\right) $:%
\begin{equation}
F_{n}:=F_{n}\left( 1\right) =\sum_{k=0}^{n}\QATOPD\{ \} {n}{k}k!.
\label{i14}
\end{equation}

According to $\left( \cite{IS}\right) $,\ these numbers are called Fubini
numbers by Comtet.

The first few geometric numbers are:%
\begin{equation}
F_{0}=1\text{, }F_{1}=1\text{, }F_{2}=3\text{, }F_{3}=13\text{, }F_{4}=75%
\text{.}  \label{i15}
\end{equation}

Boyadzhiev $\left( \cite{B}\right) $\ introduced\ the "general geometric
polynomials" as%
\begin{equation}
F_{n,r}\left( x\right) =\frac{1}{\Gamma \left( r\right) }\sum_{k=0}^{n}%
\QATOPD\{ \} {n}{k}\Gamma \left( k+r\right) x^{k}\text{,}  \label{i16+}
\end{equation}%
where Re$\left( r\right) >0$. In the third section we will deal with the
general geometric polynomials.

Exponential and geometric polynomials are connected by the relation $\left( 
\cite{B}\right) $%
\begin{equation}
F_{n}\left( z\right) =\int_{0}^{\infty }\phi _{n}\left( z\lambda \right)
e^{-\lambda }d\lambda .  \label{i16}
\end{equation}

In \cite{DK} the authors obtained some fundemental properties of exponential
and geometric polynomials and numbers using Euler- Seidel matrices method.

\textbf{Harmonic and Hyperharmonic numbers}

The $n$-th harmonic number is defined by the $n$-th partial sum of the
harmonic series as%
\begin{equation}
H_{n}:=\sum_{k=1}^{n}\frac{1}{k}\text{,}  \label{i17}
\end{equation}

where $H_{0}=0$.

For an integer $\alpha >1$, let

\begin{equation}
H_{n}^{(\alpha )}:=\sum_{k=1}^{n}H_{k}^{(\alpha -1)}  \label{i18}
\end{equation}%
with $H_{n}^{(1)}:=H_{n}$, be the $n$-th hyperharmonic number of order $%
\alpha $ $\left( \cite{BG, CG}\right) $.

These numbers can be expressed in terms of binomial coefficients and
ordinary harmonic numbers as $\left( \cite{CG, MD}\right) $:%
\begin{equation}
H_{n}^{(\alpha )}=\binom{n+\alpha -1}{\alpha -1}(H_{n+\alpha -1}-H_{\alpha
-1}).  \label{i20}
\end{equation}

Well-known generating functions of the harmonic and hyperharmonic numbers
are given as%
\begin{equation}
\sum_{n=1}^{\infty }H_{n}x^{n}=-\frac{\ln \left( 1-x\right) }{1-x}
\label{i21}
\end{equation}%
and%
\begin{equation}
\sum_{n=1}^{\infty }H_{n}^{\left( \alpha \right) }x^{n}=-\frac{\ln \left(
1-x\right) }{\left( 1-x\right) ^{\alpha }}  \label{i22}
\end{equation}%
respectively $\left( \cite{DM}\right) $.

The following relations connect harmonic and hyperharmonic numbers with the
Stirling and $r$-Stirling numbers of the first kind as $\left( \cite{BG}%
\right) $:%
\begin{equation}
\QATOPD[ ] {k+1}{2}=k!H_{k},  \label{i23}
\end{equation}%
and%
\begin{equation}
k!H_{k}^{\left( r\right) }=\QATOPD[ ] {n+r}{r+1}_{r}.  \label{i24}
\end{equation}

\section{Transformation of harmonic numbers}

\qquad In this section we study the series related to harmonic numbers using
the transformation formula $\left( \ref{i0}\right) $.

We set $g$ in $\left( \ref{i0}\right) $ as the generating function of
harmonic numbers which is given by equation $\left( \ref{i21}\right) $.
After rearranging the $kth$ derivative of the RHS of $\left( \ref{i21}%
\right) $ we obtain the following nice looking result:

\begin{proposition}
\label{prinduction}%
\begin{equation}
\frac{d^{k}}{d^{k}x}\left\{ -\frac{\ln \left( 1-x\right) }{1-x}\right\} =%
\frac{k!\left( H_{k}-\ln \left( 1-x\right) \right) }{\left( 1-x\right) ^{k+1}%
}.  \label{h1}
\end{equation}
\end{proposition}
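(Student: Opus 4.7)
The plan is to prove the identity by induction on $k$, since both sides have clean closed forms that differentiate compatibly.

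For the base case $k=0$, the right-hand side reduces to $(H_0 - \ln(1-x))/(1-x) = -\ln(1-x)/(1-x)$ because $H_0 = 0$, which matches the zeroth derivative on the left.

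For the inductive step, I would assume the formula holds for $k$ and differentiate the right-hand side once more. Writing
\[
\frac{d}{dx}\left[\frac{k!\bigl(H_k - \ln(1-x)\bigr)}{(1-x)^{k+1}}\right]
\]
via the product rule with $u = H_k - \ln(1-x)$ and $v = (1-x)^{-(k+1)}$ gives $u' = 1/(1-x)$ and $v' = (k+1)(1-x)^{-(k+2)}$, so after combining over the common denominator $(1-x)^{k+2}$ I should get
\[
\frac{k!}{(1-x)^{k+2}}\Bigl[1 + (k+1)H_k - (k+1)\ln(1-x)\Bigr].
\]
Factoring $(k+1)$ out of the bracket and regrouping yields $(k+1)!\bigl[H_k + \tfrac{1}{k+1} - \ln(1-x)\bigr]/(1-x)^{k+2}$.

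The only remaining step, and the one I expect to be the \emph{entire} substance of the argument, is the elementary harmonic-number identity $H_k + \tfrac{1}{k+1} = H_{k+1}$, which converts this last expression into $(k+1)!(H_{k+1} - \ln(1-x))/(1-x)^{k+2}$ and closes the induction. There is no genuine obstacle here; the formula is essentially chosen so that each differentiation shifts $H_k$ to $H_{k+1}$ by absorbing the extra $1/(1-x)$ produced by differentiating $-\ln(1-x)$.
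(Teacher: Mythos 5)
Your proposal is correct and follows exactly the route the paper indicates (the paper's proof is the one-line remark ``Follows by induction on $k$''); you have simply supplied the details of the base case $H_0=0$ and the inductive differentiation, and your algebra, including the key step $H_k+\tfrac{1}{k+1}=H_{k+1}$, checks out.
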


\begin{proof}
Follows by induction on $k$.
\end{proof}

From $\left( \ref{h1}\right) $ we have%
\begin{equation}
g^{\left( k\right) }\left( x\right) =\frac{k!\left( H_{k}-\ln \left(
1-x\right) \right) }{\left( 1-x\right) ^{k+1}}  \label{h1+}
\end{equation}%
and%
\begin{equation}
g^{\left( k\right) }\left( 0\right) =k!H_{k}.  \label{h2}
\end{equation}

Now we are ready to state a transformation formula for the series related to
harmonic numbers.

\begin{proposition}
\label{ph}For an entire function $f$\ the following transformation formula
holds.%
\begin{eqnarray}
\sum_{n=0}^{\infty }H_{n}f\left( n\right) x^{n} &=&\frac{1}{1-x}%
\sum_{n=0}^{\infty }\frac{f^{\left( n\right) }\left( 0\right) }{n!}%
\sum_{k=0}^{n}\QATOPD\{ \} {n}{k}k!H_{k}\left( \frac{x}{1-x}\right) ^{k}
\label{h3} \\
&&-\frac{\ln \left( 1-x\right) }{1-x}\sum_{n=0}^{\infty }\frac{f^{\left(
n\right) }\left( 0\right) }{n!}\sum_{k=0}^{n}\QATOPD\{ \} {n}{k}k!\left( 
\frac{x}{1-x}\right) ^{k}.  \notag
\end{eqnarray}
\end{proposition}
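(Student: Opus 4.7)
The plan is to specialize the master identity (\ref{i0}) to the generating function of the harmonic numbers. Take $g(x) = -\ln(1-x)/(1-x)$, which is analytic on the disk $|x|<1$. By (\ref{i21}) its Taylor coefficients at $0$ satisfy $g^{(n)}(0)/n! = H_n$, and by Proposition~\ref{prinduction} its $k$-th derivative is
\begin{equation*}
g^{(k)}(x) = \frac{k!\left(H_k - \ln(1-x)\right)}{(1-x)^{k+1}}.
\end{equation*}
Plugging these two pieces into (\ref{i0}) converts the left side into the target series $\sum H_n f(n) x^n$ and the right side into a double sum involving $x^k g^{(k)}(x)$.

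Next I would do purely algebraic bookkeeping on the inner expression. Writing
\begin{equation*}
x^k g^{(k)}(x) = \frac{k!\,H_k}{1-x}\left(\frac{x}{1-x}\right)^{k} - \frac{\ln(1-x)}{1-x}\,k!\left(\frac{x}{1-x}\right)^{k},
\end{equation*}
and substituting this into the right-hand side of (\ref{i0}), the factors $1/(1-x)$ and $-\ln(1-x)/(1-x)$ are independent of the summation indices and can be pulled out of the double sum. Splitting the double sum into the two corresponding pieces yields exactly (\ref{h3}).

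The only real point of care is verifying the hypothesis of Theorem~\ref{Main Theorem}, namely that $\sum_{n\ge 0} H_n f(n) x^n$ converges absolutely for $x$ in the relevant annulus/disk (here we need $|x|<1$ and $|x/(1-x)|$ small enough for the inner geometric-type sums to behave). Since $f$ is entire, $f^{(n)}(0)/n!$ decays faster than any geometric rate, and $H_n$ grows only logarithmically, so absolute convergence of both the original series and the two rearranged series on a common neighborhood of $0$ follows from standard estimates; this is the only spot where one has to be slightly careful rather than just pushing symbols around.

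I expect the main obstacle to be essentially notational: keeping the two contributions from the decomposition $H_k - \ln(1-x)$ correctly aligned with their prefactors and getting the exponents of $x/(1-x)$ right after combining $x^k/(1-x)^{k+1}$. Once that is done, no induction, combinatorial identity, or Stirling manipulation beyond what is already packaged in (\ref{i0}) and (\ref{h1}) is needed — the proposition is really just (\ref{i0}) with the harmonic generating function substituted and the result tidied up.
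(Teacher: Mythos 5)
Your proof is correct and follows exactly the paper's own route: the authors likewise prove Proposition \ref{ph} by substituting $g^{(k)}(x)$ from $\left( \ref{h1+}\right)$ and $g^{(k)}(0)=k!H_k$ from $\left( \ref{h2}\right)$ into the transformation formula $\left( \ref{i0}\right)$ and splitting the resulting sum according to the decomposition $H_k-\ln(1-x)$. Your added remarks on the algebraic bookkeeping and on convergence only flesh out what the paper leaves implicit.
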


\begin{proof}
Employing $\left( \ref{h1+}\right) $, $\left( \ref{h2}\right) $ in $\left( %
\ref{i0}\right) $ gives the statement.
\end{proof}

Geometric polynomials $F_{n}\left( x\right) $ appear in the second part of
the RHS of equation $\left( \ref{h3}\right) $. The first part of the RHS
contains a new family of polynomials. We will indicate them with $%
F_{n}^{h}\left( x\right) $ and call them as "$harmonic-$geometric
polynomials" because of their factor $H_{k}$. Hence the $harmonic-$geometric
polynomials are%
\begin{equation}
F_{n}^{h}\left( x\right) :=\sum_{k=0}^{n}\QATOPD\{ \} {n}{k}k!H_{k}x^{k}.
\label{h4}
\end{equation}

The first few $harmonic$-geometric polynomials are:%
\begin{equation}
\begin{tabular}{|l|}
\hline
$F_{0}^{h}\left( x\right) =0$, \\ \hline
$F_{1}^{h}\left( x\right) =x$, \\ \hline
$F_{2}^{h}\left( x\right) =3x^{2}+x$, \\ \hline
$F_{3}^{h}\left( x\right) =11x^{3}+9x^{2}+x$, \\ \hline
$F_{4}^{h}\left( x\right) =50x^{4}+66x^{3}+21x^{2}+x$, \\ \hline
$F_{5}^{h}\left( x\right) =274x^{5}+500x^{4}+275x^{3}+45x^{2}+x$. \\ \hline
\end{tabular}
\label{Lhfp}
\end{equation}

Using these notation we reformulate equation $\left( \ref{h3}\right) $ as
follows:%
\begin{equation}
\sum_{n=0}^{\infty }H_{n}f\left( n\right) x^{n}=\frac{1}{1-x}%
\sum_{n=0}^{\infty }\frac{f^{\left( n\right) }\left( 0\right) }{n!}\left\{
F_{n}^{h}\left( \frac{x}{1-x}\right) -F_{n}\left( \frac{x}{1-x}\right) \ln
\left( 1-x\right) \right\} .  \label{h5}
\end{equation}%
Formula $\left( \ref{h5}\right) $\ enables us to calculate closed forms of
some series related to harmonic numbers. Hence by means of $\left( \ref{h4}%
\right) $\ we give a corollary of Proposition $\ref{ph}$.

\begin{corollary}
For any nonnegative integer $m$ the following equality holds.%
\begin{equation}
\sum_{n=1}^{\infty }n^{m}H_{n}x^{n}=\frac{1}{1-x}\left\{ F_{m}^{h}\left( 
\frac{x}{1-x}\right) -F_{m}\left( \frac{x}{1-x}\right) \ln \left( 1-x\right)
\right\} .  \label{h6}
\end{equation}
\end{corollary}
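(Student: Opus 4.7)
The plan is to apply Proposition \ref{ph} directly with the choice $f(x)=x^{m}$, which is an entire function so the hypothesis is satisfied. First I would compute the Taylor coefficients: writing $f(x)=\sum_{j\ge 0}p_{j}x^{j}$, we have $p_{j}=\delta_{j,m}$, equivalently $f^{(n)}(0)/n!=1$ if $n=m$ and $0$ otherwise. Moreover $f(n)=n^{m}$, so the left-hand side of (\ref{h3}) becomes $\sum_{n=0}^{\infty} n^{m} H_{n} x^{n}$, and since $H_{0}=0$ (and the $n=0$ term vanishes in any case when $m\ge 1$) this coincides with $\sum_{n=1}^{\infty} n^{m}H_{n}x^{n}$.

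Next I would substitute into the right-hand side of (\ref{h5}): the outer sum $\sum_{n=0}^{\infty}\frac{f^{(n)}(0)}{n!}\{\cdots\}$ collapses to a single term at $n=m$, leaving exactly
\begin{equation*}
\frac{1}{1-x}\left\{F_{m}^{h}\!\left(\tfrac{x}{1-x}\right)-F_{m}\!\left(\tfrac{x}{1-x}\right)\ln(1-x)\right\},
\end{equation*}
which is the asserted closed form. So the corollary follows in one line from Proposition \ref{ph} once the Kronecker-delta nature of the Taylor coefficients of $x^{m}$ is noted.

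There is no real obstacle here; the only small technical point is to be careful about the lower index of summation (the proposition writes $\sum_{n=0}^{\infty}$ while the corollary writes $\sum_{n=1}^{\infty}$), which is harmless because the $n=0$ term on the left contributes $0^{m}H_{0}=0$ for every $m\ge 0$ under the standard convention $H_{0}=0$. For the $m=0$ case one may also verify consistency with the known generating function (\ref{i21}), noting $F_{0}(x)=1$ and $F_{0}^{h}(x)=0$, which reduces the right-hand side to $-\ln(1-x)/(1-x)$ as expected.
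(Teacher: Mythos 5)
Your proposal is correct and matches the paper's own proof, which likewise obtains the corollary by setting $f(x)=x^{m}$ in the transformation formula $\left( \ref{h5}\right) $; your additional remarks about the Kronecker-delta collapse of the Taylor coefficients and the $n=0$ term merely make explicit what the paper leaves implicit.
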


\begin{proof}
It follows directly by setting $f\left( x\right) =x^{m}$ in $\left( \ref{h5}%
\right) $.
\end{proof}

\begin{remark}
Equation $\left( \ref{h6}\right) $ is a generalization of the generating
function of harmonic numbers, since the case $m=0$ gives equation $\left( %
\ref{i21}\right) $. Besides this ordinary case thanks to the formula $\left( %
\ref{h6}\right) $\ we obtain generating functions of several interesting
series related to harmonic numbers. For instance the case $m=1$ in\ $\left( %
\ref{h6}\right) $\ gives%
\begin{equation}
\sum_{n=1}^{\infty }nH_{n}x^{n}=\frac{x\left( 1-\ln \left( 1-x\right)
\right) }{\left( 1-x\right) ^{2}}\text{,}  \label{h7a}
\end{equation}%
and the case $m=2$ gives%
\begin{equation}
\sum_{n=1}^{\infty }n^{2}H_{n}x^{n}=\frac{x\left( 1+2x-\left( 1+x\right) \ln
\left( 1-x\right) \right) }{\left( 1-x\right) ^{3}}\text{,}  \label{h7b}
\end{equation}%
and so on.
\end{remark}

Now we extend our results to multiple sums.

\begin{proposition}
We have%
\begin{eqnarray}
&&\sum_{n=1}^{\infty }\left( \sum_{r=0}^{n}\binom{n+s-r}{s}r^{m}H_{r}\right)
x^{n}=\sum_{n=1}^{\infty }\left( \sum_{0\leq i\leq i_{1}\leq \cdots \leq
i_{s}\leq n}i^{m}H_{i}\right) x^{n}  \notag \\
&=&\frac{1}{\left( 1-x\right) ^{s+2}}\left[ F_{m}^{h}\left( \frac{x}{1-x}%
\right) -F_{m}\left( \frac{x}{1-x}\right) \ln \left( 1-x\right) \right] 
\text{,}  \label{h8}
\end{eqnarray}%
where $m$ and $s$ are nonnegative integers.
\end{proposition}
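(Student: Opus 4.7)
The plan is to deduce the proposition from Corollary \eqref{h6} by multiplying both sides by $\left(1-x\right)^{-\left(s+1\right)}$ and interpreting the resulting convolution combinatorially. There are two separate claims to verify: the combinatorial identity between the two inner sums on the left-hand side, and the closed-form evaluation on the right.

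For the inner combinatorial identity, I would reorganize the sum $\sum_{0\leq i\leq i_{1}\leq \cdots \leq i_{s}\leq n} i^{m}H_{i}$ by grouping terms according to the value $i=r$. For each fixed $r\in \{0,1,\ldots ,n\}$, the number of weakly increasing tuples $\left(i_{1},\ldots ,i_{s}\right)$ satisfying $r\leq i_{1}\leq \cdots \leq i_{s}\leq n$ equals the number of multisets of size $s$ drawn from $\{r,r+1,\ldots ,n\}$, i.e.\ $\binom{n-r+s}{s}$, by the standard stars-and-bars count. Summing $r^{m}H_{r}$ weighted by this multiplicity yields $\sum_{r=0}^{n}\binom{n+s-r}{s}r^{m}H_{r}$, which gives the first equality.

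For the closed-form evaluation, I would use the standard expansion
\begin{equation*}
\frac{1}{\left(1-x\right)^{s+1}}=\sum_{k=0}^{\infty }\binom{k+s}{s}x^{k}.
\end{equation*}
Multiplying both sides of \eqref{h6} by this series and applying the Cauchy product, the left-hand side becomes
\begin{equation*}
\sum_{n=1}^{\infty }\left( \sum_{r=1}^{n}\binom{n-r+s}{s}r^{m}H_{r}\right) x^{n},
\end{equation*}
which matches the target expression since the $r=0$ contribution vanishes (either because $H_{0}=0$, or because $0^{m}=0$ for $m\geq 1$). On the right-hand side, the multiplication simply upgrades the prefactor from $\left(1-x\right)^{-1}$ to $\left(1-x\right)^{-\left(s+2\right)}$, leaving the bracketed expression in $F_{m}^{h}$ and $F_{m}$ untouched.

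The only real obstacle is the bookkeeping of the summation ranges and the handling of the $r=0$ boundary term; once one recognizes $\left(1-x\right)^{-\left(s+1\right)}$ as the generating function realizing the $s$-fold iterated partial-summation (equivalently, the multiset count), the remainder is a routine manipulation building directly on the case $s=0$ already established in Corollary \eqref{h6}.
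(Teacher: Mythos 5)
Your proposal is correct and follows essentially the same route as the paper: the paper's proof also multiplies equation \eqref{h6} by the Newton binomial series $\left(1-x\right)^{-\left(s+1\right)}$ and invokes the identity $\sum_{r=0}^{n}\binom{n+s-r}{s}r^{m}H_{r}=\sum_{0\leq i\leq i_{1}\leq \cdots \leq i_{s}\leq n}i^{m}H_{i}$, which you justify by the same multiset count. Your write-up merely supplies the stars-and-bars details and the $r=0$ boundary remark that the paper leaves implicit.
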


\begin{proof}
By multiplying both sides of equation $\left( \ref{h6}\right) $ with the
Newton binomial series and considering%
\begin{equation}
\sum_{r=0}^{n}\binom{n+s-r}{s}r^{m}H_{r}=\sum_{0\leq i\leq i_{1}\leq \cdots
\leq i_{s}\leq n}i^{m}H_{i}.  \label{h9}
\end{equation}%
we obtain the statement.
\end{proof}

\begin{corollary}
\begin{equation*}
\sum_{n=0}^{\infty }H_{n}^{\left( s\right) }x^{n}=\frac{-\ln \left(
1-x\right) }{\left( 1-x\right) ^{s}}.
\end{equation*}
\end{corollary}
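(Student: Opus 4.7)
The plan is to deduce this corollary by specializing $\left(\ref{h8}\right)$ at $m=0$ and then recognizing the resulting coefficient sum as a hyperharmonic number, so that no fresh analytic work is required.

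First I would compute $F_{0}$ and $F_{0}^{h}$ directly from the definitions $\left(\ref{i13}\right)$ and $\left(\ref{h4}\right)$. Only the $k=0$ term is present in each sum; using $\QATOPD\{ \} {0}{0}=1$, $0!=1$, and $H_{0}=0$, this gives $F_{0}(y)=1$ and $F_{0}^{h}(y)=0$. Substituting $m=0$ into $\left(\ref{h8}\right)$ then collapses the bracket to $-\ln (1-x)$, so
\begin{equation*}
\sum_{n=1}^{\infty }\left( \sum_{r=0}^{n}\binom{n+s-r}{s}H_{r}\right)
x^{n}=\frac{-\ln (1-x)}{(1-x)^{s+2}}.
\end{equation*}

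Next I would identify the inner sum with a hyperharmonic number. Using the recursive definition $H_{n}^{(\alpha )}=\sum_{k=1}^{n}H_{k}^{(\alpha -1)}$ from $\left(\ref{i18}\right)$ together with the hockey-stick identity $\sum_{j=k}^{n}\binom{j-k+t}{t}=\binom{n-k+t+1}{t+1}$, a short induction on $\alpha \geq 2$ yields
\begin{equation*}
H_{n}^{(\alpha )}=\sum_{k=1}^{n}\binom{n-k+\alpha -2}{\alpha -2}H_{k}.
\end{equation*}
Setting $\alpha =s+2$ and remembering that $H_{0}=0$, the inner coefficient in the previous display is exactly $H_{n}^{(s+2)}$. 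Substituting and then relabeling $s+2\mapsto s$ delivers the stated generating function for all $s\geq 2$; the boundary cases $s=0$ and $s=1$ are either immediate or reduce to $\left(\ref{i21}\right)$.

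The only step that requires any real calculation is the combinatorial identity for $H_{n}^{(\alpha )}$, and it is a one-line consequence of the hockey-stick identity, so no genuine obstacle arises: the generating-function proposition $\left(\ref{h8}\right)$ has already done all the analytic work, and the corollary is essentially its $m=0$ reformulation.
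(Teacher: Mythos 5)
Your proof is correct and follows the paper's route exactly: the paper's own proof is just "set $m=0$ in $\left(\ref{h8}\right)$ and use $F_{0}^{h}=0$, $F_{0}=1$." The only difference is that you explicitly verify $\sum_{r=0}^{n}\binom{n+s-r}{s}H_{r}=H_{n}^{(s+2)}$ via the hockey-stick identity, whereas the paper leaves this identification implicit (it already follows from the nested-sum form displayed in $\left(\ref{h8}\right)$ together with the recursive definition $\left(\ref{i18}\right)$).
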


\begin{proof}
Setting $m=0$ and considering $F_{0}^{h}\left( x\right) =0$ and $F_{0}\left(
x\right) =1$ gives the desired result.
\end{proof}

As a one more corollary we have following graceful identity.

\begin{corollary}
\begin{eqnarray}
&&\sum_{n=1}^{\infty }\left( 1^{m}H_{1}+2^{m}H_{2}+\cdots +n^{m}H_{n}\right)
x^{n}  \label{h10} \\
&=&\frac{1}{\left( 1-x\right) ^{2}}\left[ F_{m}^{h}\left( \frac{x}{1-x}%
\right) -F_{m}\left( \frac{x}{1-x}\right) \ln \left( 1-x\right) \right] . 
\notag
\end{eqnarray}
\end{corollary}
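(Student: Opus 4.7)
The plan is to recognize this corollary as the specialization $s=1$ of the preceding proposition. With $s=1$, the binomial coefficient $\binom{n+s-r}{s}$ collapses to $\binom{n+1-r}{1} = n+1-r$, and more importantly $\frac{1}{(1-x)^{s+2}}$ becomes $\frac{1}{(1-x)^2}$, matching the right-hand side of $(\ref{h10})$. So the entire task is to verify that the left-hand side of $(\ref{h10})$ is what the left-hand side of $(\ref{h8})$ produces when $s=1$.

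First I would observe that the Cauchy product identity
\begin{equation*}
\frac{1}{1-x}\sum_{n=1}^{\infty} a_n x^n = \sum_{n=1}^{\infty}\left(\sum_{k=1}^{n} a_k\right) x^n
\end{equation*}
applied to $a_n = n^m H_n$ turns $(\ref{h6})$ directly into the desired formula $(\ref{h10})$, since multiplying both sides of $(\ref{h6})$ by $\frac{1}{1-x}$ yields $\frac{1}{(1-x)^2}$ times the bracketed polynomial expression. This is the shortest route and bypasses $(\ref{h8})$ entirely.

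Alternatively, and this is probably what the authors intend, I would simply invoke the proposition with $s=1$ and note that the resulting iterated sum $\sum_{0 \le i \le i_1 \le n} i^m H_i$ reduces, upon summing the innermost index $i_1$ over $\{i, i+1, \ldots, n\}$, to $\sum_{i=0}^{n}(n-i+1) i^m H_i$, which equals $\sum_{r=0}^{n}(n+1-r) r^m H_r$. Either interpretation of the left-hand side of $(\ref{h8})$ with $s=1$ coincides with $\sum_{k=1}^{n} k^m H_k$ up to shifting summation bounds (the $i=0$ term vanishes since $H_0=0$, and the factor $n+1-r$ is absorbed by the extra $\frac{1}{1-x}$ factor).

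There is no real obstacle here since the statement follows mechanically from $(\ref{h6})$ or from $(\ref{h8})$. The only mild bookkeeping point is confirming that the triangular sum in $(\ref{h8})$ specializes correctly; once that is checked, the identity is immediate.

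\begin{proof}
Set $s=1$ in Proposition stating $(\ref{h8})$, or equivalently, multiply both sides of $(\ref{h6})$ by $\frac{1}{1-x}=\sum_{n=0}^{\infty}x^n$ and use the Cauchy product rule to recognize the coefficient of $x^n$ on the left as $\sum_{k=1}^{n} k^m H_k$.
\end{proof}
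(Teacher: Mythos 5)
Your second route --- multiplying both sides of $(\ref{h6})$ by $\frac{1}{1-x}=\sum_{n\geq 0}x^{n}$ and reading off the Cauchy product --- is correct, complete, and is in substance what the paper intends (the corollary is stated without proof as a specialization of $(\ref{h8})$, which was itself derived from $(\ref{h6})$ by exactly this kind of multiplication by a Newton binomial series). The genuine error is your identification of which case of $(\ref{h8})$ this is: you repeatedly assert $s=1$, but with $s=1$ the prefactor $\frac{1}{(1-x)^{s+2}}$ equals $\frac{1}{(1-x)^{3}}$, not $\frac{1}{(1-x)^{2}}$ as you claim, and the left-hand side carries the weights $\binom{n+1-r}{1}=n+1-r$, so that specialization yields the different identity
\[
\sum_{n=1}^{\infty }\Bigl( \sum_{r=0}^{n}\left( n+1-r\right) r^{m}H_{r}\Bigr) x^{n}=\frac{1}{\left( 1-x\right) ^{3}}\Bigl[ F_{m}^{h}\Bigl( \tfrac{x}{1-x}\Bigr) -F_{m}\Bigl( \tfrac{x}{1-x}\Bigr) \ln \left( 1-x\right) \Bigr] .
\]
The corollary $(\ref{h10})$ is the case $s=0$: there $\binom{n-r}{0}=1$, the inner sum is $\sum_{r=0}^{n}r^{m}H_{r}=\sum_{k=1}^{n}k^{m}H_{k}$ since $H_{0}=0$, and the prefactor is $\frac{1}{(1-x)^{2}}$, exactly matching the statement.

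Your parenthetical attempt to reconcile the mismatch --- that ``the factor $n+1-r$ is absorbed by the extra $\frac{1}{1-x}$ factor'' --- does not hold up: nothing absorbs that weight, and the two routes you present as ``equivalent'' (set $s=1$ in $(\ref{h8})$ versus multiply $(\ref{h6})$ once by $\frac{1}{1-x}$) prove different statements. Replace $s=1$ by $s=0$ throughout, or simply retain the Cauchy-product derivation from $(\ref{h6})$, and the argument is correct.
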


Other values of $m$ in $\left( \ref{h10}\right) $ lead the following
interesting sums:

$m=1$ in $\left( \ref{h10}\right) $\ gives%
\begin{equation}
\sum_{n=1}^{\infty }\left( \sum_{k=1}^{n}kH_{k}\right) x^{n}=\frac{x\left(
1-\ln \left( 1-x\right) \right) }{\left( 1-x\right) ^{3}}\text{,}
\label{sm1}
\end{equation}

$m=2$ gives%
\begin{equation}
\sum_{n=1}^{\infty }\left( \sum_{k=1}^{n}k^{2}H_{k}\right) x^{n}=\frac{%
x\left( 1+2x-\left( 1+x\right) \ln \left( 1-x\right) \right) }{\left(
1-x\right) ^{4}}\text{,}  \label{sm2}
\end{equation}%
and so on.

\begin{remark}
All these formulas and equations which we obtained until now, show that $%
harmonic$-geometric polynomials have strong relation with the series of
harmonic numbers. We could state the generating functions of some series
related to harmonic numbers in terms of $harmonic$-geometric polynomials,
see for instance equations $\left( \ref{h6}\right) $ and $\left( \ref{h8}%
\right) $.
\end{remark}

\begin{remark}
\label{RP}Most of the results in this section are obtained by setting $%
f\left( x\right) =x^{m}$ in the transformation formula $\left( \ref{h5}%
\right) $. It is possible to obtain more general results by setting $f\left(
x\right) $ in $\left( \ref{h5}\right) $\ as an arbitrary polynomial of order 
$m$ as%
\begin{equation}
f\left( x\right) =p_{m}x^{m}+p_{m-1}x^{m-1}+\cdots +p_{1}x+p_{0}  \label{p0}
\end{equation}%
where $p_{0},$ $p_{1},$ $\cdots ,p_{m-1},$ $p_{m}$ are any complex numbers.
Hence we get following equation which is more general than $\left( \ref{h6}%
\right) $:%
\begin{eqnarray}
&&\sum_{n=0}^{\infty }\left( p_{m}n^{m}+p_{m-1}n^{m-1}+\cdots
+p_{1}n+p_{0}\right) H_{n}x^{n}  \label{p1} \\
&=&\frac{1}{1-x}\sum_{k=0}^{m}p_{k}\left\{ F_{k}^{h}\left( \frac{x}{1-x}%
\right) -F_{k}\left( \frac{x}{1-x}\right) \ln \left( 1-x\right) \right\} . 
\notag
\end{eqnarray}
\end{remark}

Specializing coefficients of $f$ gives more closed forms of harmonic number
series. Each polynomial creates another sum. For instance by setting $%
p_{k}=1 $ for each $k=0,1,\ldots ,n$ in $\left( \ref{p0}\right) $\ we get%
\begin{eqnarray}
&&\sum_{n=0}^{\infty }\left( n^{m}+n^{m-1}+\cdots +n+1\right) H_{n}x^{n}
\label{p2} \\
&=&\frac{1}{1-x}\sum_{k=0}^{m}\left\{ F_{k}^{h}\left( \frac{x}{1-x}\right)
-F_{k}\left( \frac{x}{1-x}\right) \ln \left( 1-x\right) \right\} .  \notag
\end{eqnarray}%
This formula leads the following sums:

The case $m=1$ in $\left( \ref{p2}\right) $ gives%
\begin{equation}
\sum_{n=1}^{\infty }\left( n+1\right) H_{n}x^{n}=\frac{x-\ln \left(
1-x\right) }{\left( 1-x\right) ^{2}}.  \label{p3}
\end{equation}

The case $m=2$ in $\left( \ref{p2}\right) $ gives%
\begin{equation}
\sum_{n=1}^{\infty }\left( n^{2}+n+1\right) H_{n}x^{n}=\frac{x^{2}+2x-\left(
1+x^{2}\right) \ln \left( 1-x\right) }{\left( 1-x\right) ^{3}}\text{,}
\label{p4}
\end{equation}%
and so on.

By setting $p_{k}=k$ for each $k=0,1,\ldots ,n$ in $\left( \ref{p0}\right) $%
\ we get%
\begin{eqnarray}
&&\sum_{n=1}^{\infty }\left( mn^{m}+\left( m-1\right) n^{m-1}+\cdots
+n\right) H_{n}x^{n}  \label{p5} \\
&=&\frac{1}{1-x}\sum_{k=1}^{m}k\left\{ F_{k}^{h}\left( \frac{x}{1-x}\right)
-F_{k}\left( \frac{x}{1-x}\right) \ln \left( 1-x\right) \right\} .  \notag
\end{eqnarray}

We can give some examples of special cases of $\left( \ref{p5}\right) $\ as
well. For example the case $m=1$ in $\left( \ref{p5}\right) $\ gives the sum 
$\left( \ref{h7a}\right) $. The case $m=2$ in $\left( \ref{p5}\right) $ gives%
\begin{equation}
\sum_{n=1}^{\infty }n\left( 2n+1\right) H_{n}x^{n}=\frac{3x\left( 1+x\right)
-x\left( x+3\right) \ln \left( 1-x\right) }{\left( 1-x\right) ^{3}}\text{,}
\label{p6}
\end{equation}%
and so on.

We obtain some of these results by using operator argument in the following
subsection.

\subsection{\textbf{The operator }$\left( xD\right) $}

The operator $\left( xD\right) $ operates a function $f\left( x\right) $ as%
\begin{equation}
\left( xD\right) f\left( x\right) =xf^{\prime }\left( x\right)  \label{o1}
\end{equation}%
where $f^{\prime }$ is the first derivative of the function $f.$

For any $m$-times differentiable function $f$ we have $\left( \cite{B}%
\right) ,$%
\begin{equation}
\left( xD\right) ^{m}f\left( x\right) =\sum_{k=0}^{m}\QATOPD\{ \}
{m}{k}x^{k}f^{\left( k\right) }\left( x\right) .  \label{o2}
\end{equation}%
This fact can be easily proven with induction on $m$ using $\left( \ref{i3}%
\right) $.

We consider the generating function of the harmonic numbers in the formula $%
\left( \ref{o2}\right) .$ With the help of Proposition \ref{prinduction} we
have%
\begin{eqnarray*}
\left( xD\right) ^{m}\left( -\frac{\ln \left( 1-x\right) }{1-x}\right)
&=&\sum_{k=0}^{m}\QATOPD\{ \} {m}{k}x^{k}\frac{k!\left( H_{k}-\ln \left(
1-x\right) \right) }{\left( 1-x\right) ^{k+1}} \\
&=&\frac{1}{1-x}\left[ F_{m}^{h}\left( \frac{x}{1-x}\right) -F_{m}\left( 
\frac{x}{1-x}\right) \ln \left( 1-x\right) \right] .
\end{eqnarray*}%
On the other hand by using $\left( \ref{o1}\right) $ we have%
\begin{equation*}
\left( xD\right) ^{m}\left( \sum_{n=1}^{\infty }H_{n}x^{n}\right)
=\sum_{n=1}^{\infty }H_{n}n^{m}x^{n}.
\end{equation*}%
Combining these two results we obtain the formula $\left( \ref{h6}\right) $.

\subsection{Harmonic-geometric numbers}

\begin{definition}
$Harmonic-$geometric numbers $F_{n}^{h}$ are obtained by setting $x=1$ in $%
\left( \ref{h4}\right) $ as%
\begin{equation}
F_{n}^{h}:=F_{n}^{h}\left( 1\right) =\sum_{k=0}^{n}\QATOPD\{ \}
{n}{k}k!H_{k}.  \label{hf1}
\end{equation}
\end{definition}

The first few $harmonic$-geometric numbers are

\begin{equation}
F_{0}^{h}=0,\text{ }F_{1}^{h}=1,\text{ }F_{2}^{h}=4,\text{ }F_{3}^{h}=21,%
\text{ }F_{4}^{h}=138,\text{ }F_{5}^{h}=1095,\text{ }\ldots  \label{Lhfn}
\end{equation}

\begin{remark}
By using $\left( \ref{i23}\right) $ we can state $harmonic$-geometric
polynomials and numbers just in terms of Stirling numbers of the first and
second kind as follows%
\begin{equation}
F_{n}^{h}\left( x\right) =\sum_{k=0}^{n}\QATOPD\{ \} {n}{k}\QATOPD[ ] {k+1}{2%
}x^{k}\text{,}  \label{hf2}
\end{equation}%
\begin{equation}
F_{n}^{h}=\sum_{k=0}^{n}\QATOPD\{ \} {n}{k}\QATOPD[ ] {k+1}{2}.  \label{hf3}
\end{equation}
\end{remark}

\subsection{Harmonic-exponential polynomials and numbers}

\qquad Geometric and exponential polynomials are connected to each other via
equation $\left( \ref{i16}\right) $. Now with this motivation we define $%
harmonic-$exponential polynomials and numbers.

\begin{definition}
$Harmonic-$exponential polynomials and numbers are respectively given by the
following equations%
\begin{equation}
\phi _{n}^{h}\left( x\right) :=\sum_{k=0}^{n}\QATOPD\{ \} {n}{k}H_{k}x^{k}
\label{hb1}
\end{equation}%
and%
\begin{equation}
\phi _{n}^{h}:=\phi _{n}^{h}\left( 1\right) =\sum_{k=0}^{n}\QATOPD\{ \}
{n}{k}H_{k}\text{.}  \label{hb2}
\end{equation}
\end{definition}

The first few $harmonic-$exponential polynomials are,%
\begin{equation}
\begin{tabular}{|l|}
\hline
$\phi _{0}^{h}\left( x\right) =0$, \\ \hline
$\phi _{1}^{h}\left( x\right) =x,$ \\ \hline
$\phi _{2}^{h}\left( x\right) =x+\frac{3}{2}x^{2}$, \\ \hline
$\phi _{3}^{h}\left( x\right) =x+\frac{9}{2}x^{2}+\frac{11}{6}x^{3}$, \\ 
\hline
$\phi _{4}^{h}\left( x\right) =x+\frac{21}{2}x^{2}+11x^{3}+\frac{25}{12}%
x^{4} $, \\ \hline
$\phi _{5}^{h}\left( x\right) =x+\frac{45}{2}x^{2}+\frac{275}{6}x^{3}+\frac{%
250}{12}x^{4}+\frac{137}{60}x^{5}$. \\ \hline
\end{tabular}
\label{Lhbp}
\end{equation}

And $harmonic-$exponential numbers are,%
\begin{equation}
\phi _{0}^{h}=0,\text{ }\phi _{1}^{h}=1,\text{ }\phi _{2}^{h}=\frac{5}{2},%
\text{ }\phi _{3}^{h}=\frac{22}{3},\text{ }\phi _{4}^{h}=\frac{295}{12},%
\text{ }\phi _{5}^{h}=\frac{1849}{20},\text{ \ldots }  \label{Lhbn}
\end{equation}

We can extend the relation $\left( \ref{i16}\right) $ for harmonic types of
these polynomials as%
\begin{equation}
F_{n}^{h}\left( z\right) =\int_{0}^{\infty }\phi _{n}^{h}\left( z\lambda
\right) e^{-\lambda }d\lambda .  \label{hb3}
\end{equation}

\section{Hyperharmonic-geometric and exponential polynomials}

\qquad In this section we generalize almost all of our results which we
obtained in the previous section.

Now instead of the generating function of harmonic numbers, we consider the
generating function of the hyperharmonic numbers $\left( \ref{i22}\right) $\
in $\left( \ref{i0}\right) $. Let us take $g$ in $\left( \ref{i0}\right) $ as%
\begin{equation*}
g\left( x\right) =\sum_{n=1}^{\infty }H_{n}^{\left( \alpha \right) }x^{n}=-%
\frac{\ln \left( 1-x\right) }{\left( 1-x\right) ^{\alpha }}.
\end{equation*}

Next proposition gives a nice formula for $kth$ derivatives of $g\left(
x\right) $.

\begin{proposition}
\begin{equation}
\frac{d^{k}}{d^{k}x}\left\{ -\frac{\ln \left( 1-x\right) }{\left( 1-x\right)
^{\alpha }}\right\} =\frac{\Gamma \left( k+\alpha \right) }{\Gamma \left(
\alpha \right) }\frac{1}{\left( 1-x\right) ^{\alpha +k}}\left( H_{k+\alpha
-1}-H_{\alpha -1}-\ln \left( 1-x\right) \right) \text{.}  \label{hh1}
\end{equation}
\end{proposition}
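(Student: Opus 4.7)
The plan is to prove this by induction on $k$, mirroring the strategy used for Proposition~\ref{prinduction} (which is the $\alpha=1$ special case). The integer/real parameter $\alpha$ only enters as a shift in the denominator's exponent and in the harmonic-number indexing, so no genuinely new idea is needed: the argument is essentially a bookkeeping exercise with the product rule, the recurrence $H_{n}=H_{n-1}+1/n$, and the functional equation $\Gamma(n+1)=n\,\Gamma(n)$.

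For the base case $k=0$, the right-hand side reads
\begin{equation*}
\frac{\Gamma(\alpha)}{\Gamma(\alpha)}\cdot\frac{1}{(1-x)^{\alpha}}\bigl(H_{\alpha-1}-H_{\alpha-1}-\ln(1-x)\bigr)=-\frac{\ln(1-x)}{(1-x)^{\alpha}},
\end{equation*}
which matches $g(x)$ itself.

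For the inductive step, assume the identity holds for $k$ and differentiate the right-hand side once more. The product rule, applied to the factors $(1-x)^{-(\alpha+k)}$ and $H_{k+\alpha-1}-H_{\alpha-1}-\ln(1-x)$, produces
\begin{equation*}
\frac{\Gamma(k+\alpha)}{\Gamma(\alpha)}\cdot\frac{1}{(1-x)^{\alpha+k+1}}\Bigl[(\alpha+k)\bigl(H_{k+\alpha-1}-H_{\alpha-1}-\ln(1-x)\bigr)+1\Bigr].
\end{equation*}
Pulling the factor $(\alpha+k)$ into the Gamma function via $(\alpha+k)\Gamma(k+\alpha)=\Gamma(k+1+\alpha)$ and absorbing the $+1$ into the harmonic part by means of $H_{k+\alpha-1}+\tfrac{1}{\alpha+k}=H_{k+\alpha}$ turns the bracket into $H_{(k+1)+\alpha-1}-H_{\alpha-1}-\ln(1-x)$, which is exactly the statement for $k+1$.

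The only place where one has to be slightly careful is the last algebraic step: noticing that the leftover constant $1$ that arises from differentiating $-\ln(1-x)$ (which gives $1/(1-x)$, after it combines with the other factor of $(1-x)^{-(\alpha+k)}$) must be rewritten as $(\alpha+k)\cdot\tfrac{1}{\alpha+k}$ and then merged with $H_{k+\alpha-1}$ via the harmonic-number recurrence. Once that manipulation is in place the induction closes immediately, with no restriction on whether $\alpha$ is integer or real (positive), since nothing beyond $\Gamma(n+1)=n\Gamma(n)$ is used.
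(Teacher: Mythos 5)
Your proposal is correct and follows exactly the route the paper takes: the paper's proof is simply ``Follows by induction on $k$,'' and your base case, product-rule differentiation, and the use of $(\alpha+k)\Gamma(k+\alpha)=\Gamma(k+\alpha+1)$ together with $H_{k+\alpha-1}+\tfrac{1}{k+\alpha}=H_{k+\alpha}$ are precisely the details that induction requires. Nothing is missing; you have merely written out what the authors left implicit.
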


\begin{proof}
Follows by induction on $k$.
\end{proof}

Hence we have%
\begin{equation}
g^{\left( k\right) }\left( x\right) =\frac{\Gamma \left( k+\alpha \right) }{%
\Gamma \left( \alpha \right) }\frac{1}{\left( 1-x\right) ^{\alpha +k}}\left(
H_{k+\alpha -1}-H_{\alpha -1}-\ln \left( 1-x\right) \right)  \label{hh1+}
\end{equation}%
and

\begin{equation}
g^{\left( k\right) }\left( 0\right) =\frac{\Gamma \left( k+\alpha \right) }{%
\Gamma \left( \alpha \right) }\left( H_{k+\alpha -1}-H_{\alpha -1}\right) 
\text{.}  \label{hh1++}
\end{equation}%
In the light of equation $\left( \ref{i20}\right) $ we can state $\left( \ref%
{hh1++}\right) $ simply as%
\begin{equation}
g^{\left( k\right) }\left( 0\right) =k!H_{k}^{\left( \alpha \right) }.
\label{hh2}
\end{equation}%
Now we are ready to prove the following proposition.

\begin{proposition}
\label{phht}Let an entire function $f$ be given. Then we have the following
transformation formula:%
\begin{eqnarray}
&&\sum_{n=0}^{\infty }H_{n}^{\left( \alpha \right) }f\left( n\right) x^{n} 
\notag \\
&=&\frac{1}{\left( 1-x\right) ^{\alpha }}\sum_{n=0}^{\infty }\frac{f^{\left(
n\right) }\left( 0\right) }{n!}\sum_{k=0}^{n}\QATOPD\{ \}
{n}{k}k!H_{k}^{\left( \alpha \right) }\left( \frac{x}{1-x}\right) ^{k}
\label{hh'3} \\
&&-\frac{\ln \left( 1-x\right) }{\left( 1-x\right) ^{\alpha }}%
\sum_{n=0}^{\infty }\frac{f^{\left( n\right) }\left( 0\right) }{n!}\frac{1}{%
\Gamma \left( \alpha \right) }\sum_{k=0}^{n}\QATOPD\{ \} {n}{k}\Gamma \left(
k+\alpha \right) \left( \frac{x}{1-x}\right) ^{k}  \notag
\end{eqnarray}
\end{proposition}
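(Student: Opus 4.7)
The plan is to apply the master transformation formula (\ref{i0}) with $g$ chosen to be the generating function of the hyperharmonic numbers, exactly as was done in Proposition \ref{ph} for the ordinary harmonic case. The two ingredients needed are a formula for the Taylor coefficients of $g$ at $0$ (which gives the left-hand side of (\ref{i0})) and a clean expression for the derivatives $g^{(k)}(x)$ (which gives the right-hand side); both are already supplied by (\ref{hh1+}) and (\ref{hh2}).

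First I would set
\begin{equation*}
g(x)=-\frac{\ln (1-x)}{(1-x)^{\alpha }}=\sum_{n=1}^{\infty }H_{n}^{(\alpha )}x^{n},
\end{equation*}
so that $g^{(n)}(0)/n!=H_{n}^{(\alpha )}$ by (\ref{hh2}). Substituting into the left-hand side of (\ref{i0}) immediately yields $\sum_{n=0}^{\infty }H_{n}^{(\alpha )}f(n)\,x^{n}$. For the right-hand side, I would plug the derivative formula (\ref{hh1+}) into the inner sum $\sum_{k=0}^{n}\bigl\{{n\atop k}\bigr\}x^{k}g^{(k)}(x)$, giving
\begin{equation*}
\sum_{k=0}^{n}\QATOPD\{\}{n}{k}x^{k}\cdot \frac{\Gamma (k+\alpha )}{\Gamma (\alpha )}\cdot \frac{H_{k+\alpha -1}-H_{\alpha -1}-\ln (1-x)}{(1-x)^{\alpha +k}}.
\end{equation*}

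The key algebraic step is to split the factor $H_{k+\alpha -1}-H_{\alpha -1}-\ln (1-x)$ into two pieces. The ``constant'' piece combines with $\Gamma (k+\alpha )/\Gamma (\alpha )$ via (\ref{i20}), since
\begin{equation*}
\frac{\Gamma (k+\alpha )}{\Gamma (\alpha )}\bigl(H_{k+\alpha -1}-H_{\alpha -1}\bigr)=k!\binom{k+\alpha -1}{\alpha -1}(H_{k+\alpha -1}-H_{\alpha -1})=k!\,H_{k}^{(\alpha )},
\end{equation*}
producing exactly the $k!H_{k}^{(\alpha )}$ coefficient appearing in the first sum of the statement. The $-\ln (1-x)$ piece retains the $\Gamma (k+\alpha )/\Gamma (\alpha )$ coefficient and yields the second sum. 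The last cosmetic step is to pull the common factor $x^{k}/(1-x)^{\alpha +k}$ apart as $(1-x)^{-\alpha }\bigl(x/(1-x)\bigr)^{k}$, so that $1/(1-x)^{\alpha }$ and $-\ln (1-x)/(1-x)^{\alpha }$ come out in front, producing the two terms of (\ref{hh'3}).

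There is no real obstacle; the only point needing care is recognizing the identity $\Gamma (k+\alpha )(H_{k+\alpha -1}-H_{\alpha -1})/\Gamma (\alpha )=k!\,H_{k}^{(\alpha )}$, which is a direct consequence of (\ref{i20}) and is what allows the $H_{k+\alpha -1}-H_{\alpha -1}$ term to be repackaged as $k!\,H_{k}^{(\alpha )}$. With that substitution made and the powers of $1-x$ redistributed, the two displayed series on the right of (\ref{hh'3}) assemble themselves without further manipulation.
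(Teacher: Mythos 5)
Your proposal is correct and follows essentially the same route as the paper: the paper's proof is precisely to invoke $\left( \ref{hh1+}\right) $ and $\left( \ref{hh2}\right) $ in the master formula $\left( \ref{i0}\right) $, which is what you do, and your use of $\left( \ref{i20}\right) $ to repackage $\frac{\Gamma \left( k+\alpha \right) }{\Gamma \left( \alpha \right) }\left( H_{k+\alpha -1}-H_{\alpha -1}\right) $ as $k!H_{k}^{\left( \alpha \right) }$ is exactly the step the paper records as $\left( \ref{hh2}\right) $. You have merely written out the details that the paper's one-line proof leaves implicit.
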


\begin{proof}
Invoking $\left( \ref{hh1+}\right) $ and $\left( \ref{hh2}\right) $ in $%
\left( \ref{i0}\right) $ gives the statement.
\end{proof}

The second part of the RHS of equation $\left( \ref{hh'3}\right) $ contains
the generalized geometric polynomials which are given by $\left( \ref{i16+}%
\right) $.

The first part of the RHS of equation $\left( \ref{hh'3}\right) $ also
contains a new family of polynomials which is a generalization of $\left( %
\ref{h4}\right) $. We call this new family as "$hyperharmonic-$geometric
polynomials". More clearly we give these polynomials as%
\begin{equation}
F_{n,\alpha }^{h}\left( x\right) =\sum_{k=0}^{n}\QATOPD\{ \}
{n}{k}k!H_{k}^{\left( \alpha \right) }x^{k}.  \label{hh5}
\end{equation}

The first few $hyperharmonic-$geometric polynomials are:

\begin{equation}
\begin{tabular}{|l|l|}
\hline
$F_{n,\alpha }^{h}\left( x\right) $ & $\alpha =2$ \\ \hline
$n=0$ & $0$ \\ \hline
$n=1$ & $x$ \\ \hline
$n=2$ & $x+5x^{2}$ \\ \hline
$n=3$ & $x+15x^{2}+26x^{3}$ \\ \hline
$n=4$ & $x+35x^{2}+156x^{3}+154x^{4}$ \\ \hline
$n=5$ & $x+75x^{2}+650x^{3}+1540x^{4}+1044x^{5}$ \\ \hline
\end{tabular}
\label{Lhhgp2}
\end{equation}%
and%
\begin{equation}
\begin{tabular}{|l|l|}
\hline
$F_{n,\alpha }^{h}\left( x\right) $ & $\alpha =3$ \\ \hline
$n=0$ & $0$ \\ \hline
$n=1$ & $x$ \\ \hline
$n=2$ & $x+7x^{2}$ \\ \hline
$n=3$ & $x+21x^{2}+47x^{3}$ \\ \hline
$n=4$ & $x+49x^{2}+282x^{3}+342x^{4}$ \\ \hline
$n=5$ & $x+105x^{2}+1175x^{3}+3420x^{4}+2754x^{5}$ \\ \hline
\end{tabular}
\label{Lhhgp3}
\end{equation}%
bear in mind that $F_{n,1}^{h}\left( x\right) =F_{n}^{h}\left( x\right) $
and we have a short list of these polynomials as $\left( \ref{Lhfp}\right) $.

In the case of $\alpha =1$ equation $\left( \ref{hh5}\right) $ gives
equation $\left( \ref{h4}\right) $.

With the help of these notation we can write the transformation formula $%
\left( \ref{hh'3}\right) $ simply as 
\begin{eqnarray}
&&\sum_{n=0}^{\infty }H_{n}^{\left( \alpha \right) }f\left( n\right) x^{n} 
\notag \\
&=&\frac{1}{\left( 1-x\right) ^{\alpha }}\sum_{n=0}^{\infty }\frac{f^{\left(
n\right) }\left( 0\right) }{n!}\left[ F_{n,\alpha }^{h}\left( \frac{x}{1-x}%
\right) -F_{n,\alpha }\left( \frac{x}{1-x}\right) \ln \left( 1-x\right) %
\right] .  \label{hh6}
\end{eqnarray}

Now we give a plain formula as a corollary of Proposition $\ref{phht}$.

\begin{corollary}
\begin{equation}
\sum_{n=1}^{\infty }n^{m}H_{n}^{\left( \alpha \right) }x^{n}=\frac{1}{\left(
1-x\right) ^{\alpha }}\left[ F_{m,\alpha }^{h}\left( \frac{x}{1-x}\right)
-F_{m,\alpha }\left( \frac{x}{1-x}\right) \ln \left( 1-x\right) \right]
\label{hh7}
\end{equation}%
where $m$ is a nonnegative integer.
\end{corollary}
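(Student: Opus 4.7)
The plan is to obtain the corollary as a direct specialization of Proposition \ref{phht}, in its compact form \eqref{hh6}, by choosing $f(x) = x^{m}$. Since $f(x) = x^{m}$ is entire, the hypothesis of the proposition is satisfied, so \eqref{hh6} applies verbatim.

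With this choice, the derivatives at zero satisfy $f^{(n)}(0) = m!\,\delta_{n,m}$, hence $f^{(n)}(0)/n! = \delta_{n,m}$. The outer series on the right-hand side of \eqref{hh6} therefore collapses to the single term $n = m$, producing
\begin{equation*}
\frac{1}{(1-x)^{\alpha}}\left[F_{m,\alpha}^{h}\!\left(\tfrac{x}{1-x}\right) - F_{m,\alpha}\!\left(\tfrac{x}{1-x}\right)\ln(1-x)\right],
\end{equation*}
which is exactly the claimed right-hand side. On the left, $\sum_{n\geq 0} n^{m} H_{n}^{(\alpha)} x^{n}$ reduces to $\sum_{n\geq 1} n^{m} H_{n}^{(\alpha)} x^{n}$ since $H_{0}^{(\alpha)} = 0$ (and, for $m\geq 1$, the $n = 0$ term vanishes anyway).

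There is essentially no obstacle here, since the statement is designed as a direct consequence of Proposition \ref{phht}; the only thing worth checking is the boundary case $m = 0$, where $F_{0,\alpha}^{h}(y) = 0$ and $F_{0,\alpha}(y) = \frac{1}{\Gamma(\alpha)}\Gamma(\alpha) = 1$, so the formula specializes to $-\ln(1-x)/(1-x)^{\alpha}$, in agreement with the known generating function \eqref{i22}. All the genuine work has already been done upstream: in deriving the closed form for $g^{(k)}$ in Proposition following \eqref{hh1}, in identifying the two emerging sums as $F_{n,\alpha}^{h}$ and $F_{n,\alpha}$ respectively, and in packaging Proposition \ref{phht} into \eqref{hh6}. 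Once those are in hand, picking $f(x) = x^{m}$ finishes the proof in one line.
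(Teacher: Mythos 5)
Your proof is correct and follows exactly the paper's own route: the authors likewise obtain \eqref{hh7} by specializing $f(x)=x^{m}$ in \eqref{hh6}, and your added details (the collapse of the outer sum via $f^{(n)}(0)/n!=\delta_{n,m}$ and the check of the $m=0$ case against \eqref{i22}) only make explicit what the paper leaves implicit.
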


\begin{proof}
Directly seen from the specializing $f\left( x\right) =x^{m}$ in $\left( \ref%
{hh6}\right) $.
\end{proof}

\begin{remark}
Formula $\left( \ref{hh7}\right) $\ is also a generalization of the
generating function of hyperharmonic numbers since the case $m=0$ gives $%
\left( \ref{i22}\right) $.
\end{remark}

Equation $\left( \ref{hh7}\right) $\ also makes it possible to get closed
forms of some series related to hyperharmonic numbers, for instance\ the
case $m=1$ in $\left( \ref{hh7}\right) $ gives%
\begin{equation}
\sum_{n=1}^{\infty }nH_{n}^{\left( \alpha \right) }x^{n}=\frac{x\left(
1-\alpha \ln \left( 1-x\right) \right) }{\left( 1-x\right) ^{\alpha +1}}
\label{Hh8}
\end{equation}%
where $F_{1,\alpha }\left( x\right) =\alpha x,$ $F_{1,\alpha }^{h}\left(
x\right) =x$.

Now we state a more general result which extends $\left( \ref{hh7}\right) $
to multiple sums.

\begin{proposition}
\begin{eqnarray}
&&\sum_{n=1}^{\infty }\left( \sum_{r=0}^{n}\binom{n+s-r}{s}%
r^{m}H_{r}^{\left( \alpha \right) }\right) x^{n}=\sum_{n=1}^{\infty }\left(
\sum_{0\leq i\leq i_{1}\leq \cdots \leq i_{s}\leq n}i^{m}H_{i}^{\left(
\alpha \right) }\right) x^{n}  \notag \\
&=&\frac{1}{\left( 1-x\right) ^{\alpha +s+1}}\left[ F_{m,\alpha }^{h}\left( 
\frac{x}{1-x}\right) -F_{m,\alpha }\left( \frac{x}{1-x}\right) \ln \left(
1-x\right) \right] .  \label{hh9}
\end{eqnarray}%
where $m$ and $s$ are nonnegative integers.
\end{proposition}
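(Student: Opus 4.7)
The plan is to mimic the argument used for the analogous harmonic identity (\ref{h8}), substituting hyperharmonic numbers in place of harmonic numbers and invoking the corollary (\ref{hh7}) in place of (\ref{h6}).

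I would start from (\ref{hh7}),
$$\sum_{n=1}^{\infty} n^{m} H_{n}^{(\alpha)} x^{n} = \frac{1}{(1-x)^{\alpha}}\left[F_{m,\alpha}^{h}\!\left(\frac{x}{1-x}\right) - F_{m,\alpha}\!\left(\frac{x}{1-x}\right)\ln(1-x)\right],$$
and multiply both sides by the Newton binomial series
$$\frac{1}{(1-x)^{s+1}} = \sum_{k=0}^{\infty}\binom{k+s}{s} x^{k}.$$
On the right-hand side the two powers of $(1-x)$ combine to give $\frac{1}{(1-x)^{\alpha+s+1}}$, while the bracketed expression is unchanged, reproducing the closed form asserted in (\ref{hh9}).

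On the left-hand side, the Cauchy product of $\sum_{r=1}^{\infty} r^{m} H_{r}^{(\alpha)} x^{r}$ with $\sum_{k=0}^{\infty}\binom{k+s}{s} x^{k}$ yields
$$\sum_{n=1}^{\infty}\left(\sum_{r=0}^{n}\binom{n+s-r}{s}\, r^{m} H_{r}^{(\alpha)}\right) x^{n},$$
which is the first sum displayed in the statement.

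For the remaining equality I would reorganize the multi-index sum by conditioning on the smallest index $i$. For each fixed $i \in \{0,1,\ldots,n\}$, the number of tuples $(i_{1},\ldots,i_{s})$ satisfying $i \le i_{1} \le \cdots \le i_{s} \le n$ equals the number of size-$s$ multisets drawn from the $(n-i+1)$-element set $\{i,i+1,\ldots,n\}$, namely $\binom{n-i+s}{s} = \binom{n+s-i}{s}$. Factoring out the common weight $i^{m} H_{i}^{(\alpha)}$ therefore gives
$$\sum_{0\le i \le i_{1}\le \cdots \le i_{s}\le n} i^{m} H_{i}^{(\alpha)} = \sum_{i=0}^{n}\binom{n+s-i}{s}\, i^{m} H_{i}^{(\alpha)},$$
which is precisely the identity relating the two sums on the left of (\ref{hh9}). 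No serious obstacle is anticipated: the only nontrivial ingredient is the standard multiset-counting identity above, and everything else is a direct hyperharmonic analogue of the argument already used for (\ref{h8}).
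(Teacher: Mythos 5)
Your proposal is correct and follows exactly the paper's own route: multiply the closed form (\ref{hh7}) by the Newton binomial series $\sum_{k\ge 0}\binom{k+s}{s}x^{k}=(1-x)^{-(s+1)}$ and identify the resulting Cauchy-product coefficient with the multi-index sum via the identity (\ref{hh10}). The only difference is that you supply the multiset-counting justification for that identity, which the paper states without proof.
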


\begin{proof}
Multiplying both sides of equation $\left( \ref{hh7}\right) $ with the
Newton binomial series and considering%
\begin{equation}
\sum_{r=0}^{n}\binom{n+s-r}{s}r^{m}H_{r}^{\left( \alpha \right)
}=\sum_{0\leq i\leq i_{1}\leq \cdots \leq i_{s}\leq n}i^{m}H_{i}^{\left(
\alpha \right) }  \label{hh10}
\end{equation}%
completes the proof.
\end{proof}

\begin{corollary}
\begin{equation*}
\sum_{n=0}^{\infty }H_{n}^{\left( s\right) }x^{n}=-\frac{\ln \left(
1-x\right) }{\left( 1-x\right) ^{s}}
\end{equation*}
\end{corollary}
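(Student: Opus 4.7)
The plan is to specialize the preceding Proposition (equation (\ref{hh9})) at $m=0$, or equivalently its single-sum parent (\ref{hh7}) at $m=0$, and to relabel the hyperharmonic parameter $\alpha$ as $s$. First I would note that the $n=0$ term of the target series vanishes because $H_0^{(s)}=0$, so starting the sum at $n=0$ or at $n=1$ is immaterial. Then, setting $m=0$ kills every outer summand except the $n=0$ contribution on the right-hand side of (\ref{hh7}), reducing the identity to one involving only $F_{0,\alpha}^{h}$ and $F_{0,\alpha}$ evaluated at $x/(1-x)$.

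The key step is the routine evaluation of these two degree-zero polynomials. The defining sum (\ref{hh5}) collapses to a single $k=0$ term, which is proportional to $H_0^{(\alpha)}=0$, so $F_{0,\alpha}^{h}\equiv 0$. The definition (\ref{i16+}) of the generalized geometric polynomial collapses similarly to $\Gamma(\alpha)^{-1}\Gamma(\alpha)=1$, so $F_{0,\alpha}\equiv 1$. Substituting these values into (\ref{hh7}) collapses its right-hand side to $-\ln(1-x)/(1-x)^{\alpha}$, and renaming $\alpha$ as $s$ produces the stated identity.

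If one prefers to work directly from the multi-sum Proposition (\ref{hh9}), only one extra bookkeeping step is required: at $m=0$ the iterated inner sum $\sum_{0\le i\le i_1\le\cdots\le i_s\le n}H_i^{(\alpha)}$ equals $H_n^{(\alpha+s+1)}$ by repeated application of the defining recurrence (\ref{i18}) for hyperharmonic numbers, and the right-hand side then simplifies to $-\ln(1-x)/(1-x)^{\alpha+s+1}$, which yields the target formula after relabelling (with the edge case corresponding to exponent $1$ being equation (\ref{i21})). No substantive obstacle arises in either route; the whole argument reduces to checking that the two relevant degree-zero polynomials take their predictable constant values and that $H_0^{(\alpha)}$ vanishes.
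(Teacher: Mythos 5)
Your proposal is correct and follows the paper's own (very terse) proof exactly: specialize $m=0$ in the preceding formula, observe that $F_{0,\alpha }^{h}\equiv 0$ because of the factor $H_{0}^{\left( \alpha \right) }=0$ and that $F_{0,\alpha }\equiv 1$, and relabel $\alpha $ as $s$. The extra remarks about the vanishing $n=0$ term and the alternative route through the multiple-sum identity are sound but not needed.
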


\begin{proof}
Specializing $m=0$ and considering $F_{0,\alpha }^{h}\left( x\right) =0$ and 
$F_{0,\alpha }\left( x\right) =1$ gives statement.
\end{proof}

For $m=1$ we get the following corollary.

\begin{corollary}
\begin{eqnarray}
&&\sum_{n=1}^{\infty }\left( \sum_{r=0}^{n}\binom{n+s-r}{s}rH_{r}^{\left(
\alpha \right) }\right) x^{n}  \label{hh10+} \\
&=&\sum_{n=1}^{\infty }\left( \sum_{0\leq i\leq i_{1}\leq \cdots \leq
i_{s}\leq n}iH_{i}^{\left( \alpha \right) }\right) x^{n}=\frac{x\left(
1-\alpha \ln \left( 1-x\right) \right) }{\left( 1-x\right) ^{\alpha +s+2}}. 
\notag
\end{eqnarray}
\end{corollary}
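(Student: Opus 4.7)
The plan is to specialize $m=1$ in the preceding Proposition (equation $\left( \ref{hh9}\right) $) and simplify, using the two ingredients already recorded in the paper: the closed forms $F_{1,\alpha }\left( x\right) =\alpha x$ and $F_{1,\alpha }^{h}\left( x\right) =x$ noted just after $\left( \ref{Hh8}\right) $. These follow immediately from the defining sums $\left( \ref{i16+}\right) $ and $\left( \ref{hh5}\right) $, because only the $k=1$ term survives (the $k=0$ term vanishes as $\QATOPD\{ \} {1}{0}=0$), giving $\tfrac{1}{\Gamma \left( \alpha \right) }\Gamma \left( 1+\alpha \right) x=\alpha x$ and $1!\,H_{1}^{\left( \alpha \right) }x=x$ respectively (since $H_{1}^{\left( \alpha \right) }=1$ for every $\alpha $).

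Substituting $\tfrac{x}{1-x}$ into these two polynomials in $\left( \ref{hh9}\right) $ with $m=1$ yields
\begin{equation*}
F_{1,\alpha }^{h}\!\left( \tfrac{x}{1-x}\right) -F_{1,\alpha }\!\left( \tfrac{x}{1-x}\right) \ln \left( 1-x\right) =\tfrac{x}{1-x}-\tfrac{\alpha x}{1-x}\ln \left( 1-x\right) =\tfrac{x\left( 1-\alpha \ln \left( 1-x\right) \right) }{1-x}.
\end{equation*}
Dividing by the prefactor $\left( 1-x\right) ^{\alpha +s+1}$ collapses the denominator to $\left( 1-x\right) ^{\alpha +s+2}$, giving the right-hand side of $\left( \ref{hh10+}\right) $. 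The equality of the two left-hand sides is exactly the combinatorial identity $\left( \ref{hh10}\right) $ established in the proof of the preceding Proposition, applied with $m=1$.

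There is no genuine obstacle here; the corollary is a direct specialization. The only step requiring any care is verifying the evaluations $F_{1,\alpha }\left( x\right) =\alpha x$ and $F_{1,\alpha }^{h}\left( x\right) =x$, which are a one-line check from the definitions, and then keeping track of the power of $1-x$ after multiplying through.
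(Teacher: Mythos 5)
Your proof is correct and matches the paper's intent exactly: the paper states this corollary as the $m=1$ specialization of the preceding proposition (equation $\left( \ref{hh9}\right) $), using the evaluations $F_{1,\alpha }\left( x\right) =\alpha x$ and $F_{1,\alpha }^{h}\left( x\right) =x$ already recorded after $\left( \ref{Hh8}\right) $. Your verification of those two evaluations and the bookkeeping of the power of $1-x$ are both accurate.
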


An example to the case $s=0$ is%
\begin{equation}
\sum_{n=1}^{\infty }\left( \sum_{k=1}^{n}kH_{k}^{\left( \alpha \right)
}\right) x^{n}=\frac{x\left( 1-\alpha \ln \left( 1-x\right) \right) }{\left(
1-x\right) ^{\alpha +2}}.  \label{hh11}
\end{equation}

The following gives a nice formula.

\begin{corollary}
\begin{eqnarray}
&&\sum_{n=1}^{\infty }\left( 1^{m}H_{1}^{\left( \alpha \right)
}+2^{m}H_{2}^{\left( \alpha \right) }+\cdots +n^{m}H_{n}^{\left( \alpha
\right) }\right) x^{n}  \notag \\
&=&\frac{1}{\left( 1-x\right) ^{\alpha +1}}\left[ F_{m,\alpha }^{h}\left( 
\frac{x}{1-x}\right) -F_{m,\alpha }\left( \frac{x}{1-x}\right) \ln \left(
1-x\right) \right] .  \label{hh12}
\end{eqnarray}
\end{corollary}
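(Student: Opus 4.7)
The plan is to obtain (\ref{hh12}) as the immediate $s = 0$ specialization of the preceding Proposition (\ref{hh9}); no fresh invocation of the master transformation (\ref{i0}) is required. Equivalently, one can start from Corollary (\ref{hh7}) and multiply through by the geometric series $(1-x)^{-1}$. Either route is essentially bookkeeping.

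Taking the first route, I would set $s = 0$ in (\ref{hh9}). The binomial weight $\binom{n-r}{0}$ equals $1$ for every $r$, so the inner sum on the left collapses to $\sum_{r=0}^{n} r^{m} H_{r}^{(\alpha)}$. Since $H_{0}^{(\alpha)} = 0$, the $r = 0$ term vanishes and the remaining sum is precisely $1^{m}H_{1}^{(\alpha)} + 2^{m}H_{2}^{(\alpha)} + \cdots + n^{m}H_{n}^{(\alpha)}$, matching the inner sum in (\ref{hh12}). On the right-hand side the denominator exponent $\alpha + s + 1$ collapses to $\alpha + 1$ while the bracketed factor is unaltered, reproducing (\ref{hh12}) verbatim.

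The alternative derivation is a one-liner. Multiplying both sides of (\ref{hh7}) by $\frac{1}{1-x}$ and invoking the Cauchy-product identity $\frac{1}{1-x}\sum_{n \geq 1} a_{n} x^{n} = \sum_{n \geq 1}\left( \sum_{k=1}^{n} a_{k} \right) x^{n}$ with $a_{n} = n^{m} H_{n}^{(\alpha)}$ produces the claim, with the factor $(1-x)^{-\alpha - 1}$ appearing automatically from $(1-x)^{-\alpha} \cdot (1-x)^{-1}$.

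No genuine obstacle is anticipated; the only cosmetic point is the shift from the summation range $\{0, 1, \ldots, n\}$ in (\ref{hh9}) to $\{1, 2, \ldots, n\}$ in (\ref{hh12}), which is justified by $H_{0}^{(\alpha)} = 0$. As a sanity check, the $m = 0$ instance should reduce via $F_{0,\alpha}^{h} \equiv 0$ and $F_{0,\alpha} \equiv 1$ to $\sum_{n \geq 1} H_{n}^{(\alpha+1)} x^{n} = -\ln(1-x)/(1-x)^{\alpha+1}$, which is exactly the generating function (\ref{i22}) evaluated at order $\alpha + 1$; this confirms the consistency of both the indexing and the power of $(1-x)$.
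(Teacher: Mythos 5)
Your proposal is correct and matches the paper's (implicit) route: the paper states this corollary without proof immediately after Proposition (\ref{hh9}), clearly intending it as the $s=0$ specialization, which is exactly your first derivation. Your alternative via multiplying (\ref{hh7}) by $\frac{1}{1-x}$ and the $m=0$ consistency check are both sound but add nothing beyond what the paper's arrangement already supplies.
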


\begin{remark}
If we set $f\left( x\right) $\ as an arbitrary polynomial of order $m$, such
as%
\begin{equation}
f\left( x\right) =p_{m}x^{m}+p_{m-1}x^{m-1}+\cdots +p_{1}x+p_{0}
\label{hh13}
\end{equation}%
where $p_{0},$ $p_{1},$ $\cdots ,p_{m-1},$ $p_{m}$ are any complex numbers,\
instead of $f\left( x\right) =x^{m}$ in $\left( \ref{hh6}\right) $ we obtain
following general formula:%
\begin{eqnarray}
&&\sum_{n=0}^{\infty }\left( p_{m}n^{m}+p_{m-1}n^{m-1}+\cdots
+p_{1}n+p_{0}\right) H_{n}^{\left( \alpha \right) }x^{n}  \label{hhex} \\
&=&\frac{1}{\left( 1-x\right) ^{\alpha }}\sum_{k=o}^{m}p_{k}\left\{
F_{k}^{h}\left( \frac{x}{1-x}\right) -F_{k}\left( \frac{x}{1-x}\right) \ln
\left( 1-x\right) \right\} .  \notag
\end{eqnarray}%
Specializing $\left( \ref{hh13}\right) $ one can obtain several closed forms
of hyperharmonic number series in a similar fashion to what we did after
Remark \ref{RP}.
\end{remark}

\subsection{Some results using the operator $\left( xD\right) $}

\qquad For the completeness of this work we also consider these generalized
arguments using operator $\left( xD\right) $.

If we set $g$ as the generating function of hyperharmonic numbers $\left( %
\ref{i22}\right) $\ in $\left( \ref{o2}\right) $, then we get%
\begin{equation*}
\left( xD\right) ^{m}\left( -\frac{\ln \left( 1-x\right) }{\left( 1-x\right)
^{\alpha }}\right) =\frac{1}{\left( 1-x\right) ^{\alpha }}\left[ F_{n,\alpha
}^{h}\left( \frac{x}{1-x}\right) -F_{n,\alpha }\left( \frac{x}{1-x}\right)
\ln \left( 1-x\right) \right] .
\end{equation*}%
On the other hand using $\left( \ref{o1}\right) $\ we have%
\begin{equation*}
\left( xD\right) ^{m}\left( \sum_{n=1}^{\infty }H_{n}^{\left( \alpha \right)
}x^{n}\right) =\sum_{n=1}^{\infty }H_{n}^{\left( \alpha \right) }n^{m}x^{n}.
\end{equation*}%
Collecting these two results gives again equation $\left( \ref{hh7}\right) $%
\begin{equation*}
\sum_{n=1}^{\infty }H_{n}^{\left( \alpha \right) }n^{m}x^{n}=\frac{1}{\left(
1-x\right) ^{\alpha }}\left[ F_{n,\alpha }^{h}\left( \frac{x}{1-x}\right)
-F_{n,\alpha }\left( \frac{x}{1-x}\right) \ln \left( 1-x\right) \right] .
\end{equation*}

\subsection{Hyperharmonic-geometric numbers}

\begin{definition}
$Hyperharmonic-$geometric numbers $F_{n,\alpha }^{h}$ are obtained by
setting $x=1$ in $\left( \ref{hh5}\right) $ as%
\begin{equation}
F_{n,\alpha }^{h}:=F_{n,\alpha }^{h}\left( 1\right) =\sum_{k=0}^{n}\QATOPD\{
\} {n}{k}k!H_{k}^{\left( \alpha \right) }.  \label{hh14}
\end{equation}
\end{definition}

The first few $hyperharmonic-$geometric numbers are:

\begin{equation}
\begin{tabular}{|l|l|l|}
\hline
$F_{n,\alpha }^{h}$ & $\alpha =2$ & $\alpha =3$ \\ \hline
$n=0$ & $0$ & $0$ \\ \hline
$n=1$ & $1$ & $1$ \\ \hline
$n=2$ & $6$ & $8$ \\ \hline
$n=3$ & $42$ & $69$ \\ \hline
$n=4$ & $346$ & $674$ \\ \hline
$n=5$ & $3310$ & $7455$ \\ \hline
\end{tabular}
\label{Lhhgn}
\end{equation}%
bear in mind that $F_{n,1}^{h}=F_{n}^{h}$.

\begin{remark}
Using $\left( \ref{i24}\right) $\ which is a relation between hyperharmonic
numbers and $r-$Stirling numbers of the first kind, we can state $%
hyperharmonic-$geometric polynomials and numbers in terms of Stirling
numbers as%
\begin{equation}
F_{n,r}^{h}\left( x\right) =\sum_{k=0}^{n}\QATOPD\{ \} {n}{k}\QATOPD[ ] {n+r%
}{r+1}_{r}x^{k},  \label{hh15}
\end{equation}%
and%
\begin{equation}
F_{n,r}^{h}=\sum_{k=0}^{n}\QATOPD\{ \} {n}{k}\QATOPD[ ] {n+r}{r+1}_{r}
\label{hh16}
\end{equation}%
respectively. One can easily see that the relations $\left( \ref{hh15}%
\right) $ and $\left( \ref{hh16}\right) $ are the generalizations of the
relations $\left( \ref{hf2}\right) $ and $\left( \ref{hf3}\right) .$
\end{remark}

Let us continue our work by defining a generalization of the exponential
polynomials.

\subsection{Hyperharmonic-exponential polynomials and numbers}

\begin{definition}
$Hyperharmonic-$exponential polynomials and numbers are defined respectively
as%
\begin{equation}
\phi _{n,\alpha }^{h}\left( x\right) :=\sum_{k=0}^{n}\QATOPD\{ \}
{n}{k}H_{k}^{\left( \alpha \right) }x^{k}\text{,}  \label{hh17}
\end{equation}%
and%
\begin{equation}
\phi _{n,\alpha }^{h}:=\phi _{n,\alpha }^{h}\left( 1\right)
=\sum_{k=0}^{n}\QATOPD\{ \} {n}{k}H_{k}^{\left( \alpha \right) }\text{.}
\label{hh18}
\end{equation}
\end{definition}

The first few $hyperharmonic-$exponential polynomials are%
\begin{equation}
\begin{tabular}{|l|l|}
\hline
$\phi _{n,\alpha }^{h}\left( x\right) $ & $\alpha =2$ \\ \hline
$n=0$ & $0$ \\ \hline
$n=1$ & $x$ \\ \hline
$n=2$ & $x+\frac{5}{2}x^{2}$ \\ \hline
$n=3$ & $x+\frac{15}{2}x^{2}+\frac{13}{3}x^{3}$ \\ \hline
$n=4$ & $x+\frac{35}{2}x^{2}+26x^{3}+\frac{77}{12}x^{4}$ \\ \hline
$n=5$ & $x+\frac{75}{2}x^{2}+\frac{325}{3}x^{3}+\frac{385}{6}x^{4}+\frac{87}{%
10}x^{5}$ \\ \hline
\end{tabular}
\label{Lhhep2}
\end{equation}%
and%
\begin{equation}
\begin{tabular}{|l|l|}
\hline
$\phi _{n,\alpha }^{h}\left( x\right) $ & $\alpha =3$ \\ \hline
$n=0$ & $0$ \\ \hline
$n=1$ & $x$ \\ \hline
$n=2$ & $x+\frac{7}{2}x^{2}$ \\ \hline
$n=3$ & $x+\frac{21}{2}x^{2}+\frac{47}{6}x^{3}$ \\ \hline
$n=4$ & $x+\frac{49}{2}x^{2}+47x^{3}+\frac{57}{4}x^{4}$ \\ \hline
$n=5$ & $x+\frac{105}{2}x^{2}+\frac{1175}{6}x^{3}+\frac{285}{2}x^{4}+\frac{%
459}{20}x^{5}$ \\ \hline
\end{tabular}
\label{Lhhep3}
\end{equation}

And also the first few $hyperharmonic-$exponential numbers are%
\begin{equation}
\begin{tabular}{|l|l|l|}
\hline
$\phi _{n,\alpha }^{h}\left( x\right) $ & $\alpha =2$ & $\alpha =3$ \\ \hline
$n=0$ & $0$ & $0$ \\ \hline
$n=1$ & $1$ & $1$ \\ \hline
$n=2$ & $\frac{7}{2}$ & $\frac{9}{2}$ \\ \hline
$n=3$ & $\frac{77}{6}$ & $\frac{58}{3}$ \\ \hline
$n=4$ & $\frac{611}{12}$ & $\frac{347}{4}$ \\ \hline
$n=5$ & $\frac{2197}{10}$ & $\frac{24887}{60}$ \\ \hline
\end{tabular}
\label{Lhhen}
\end{equation}

Bearing in mind that, the case $\alpha =1$ gives $\phi _{n,1}^{h}\left(
x\right) =\phi _{n}^{h}\left( x\right) $ and $\phi _{n,1}^{h}=\phi _{n}^{h}$%
. The lists of $\phi _{n}^{h}\left( x\right) $ and $\phi _{n}^{h}$ in
already given in $\left( \ref{Lhbp}\right) $ and $\left( \ref{Lhbn}\right) $.

Also for these new concepts we can generalize the relation $\left( \ref{hb3}%
\right) $ as%
\begin{equation}
F_{n,\alpha }^{h}\left( z\right) =\int_{0}^{\infty }\phi _{n,\alpha
}^{h}\left( z\lambda \right) e^{-\lambda }d\lambda .  \label{hh19}
\end{equation}

\textbf{Acknowledgement}

Research of the first and second author are supported by Akdeniz University
Scientific Research project Unit.

The authors would like to thank to Professor Khristo N. Boyadzhiev for his
help and support.

\end{document}